\DeclarePairedDelimiter{\biggg}{\bigg(}{\bigg)}
\newcommand{\rpm}{\sbox0{$1$}\sbox2{$\scriptstyle\pm$}
  \raise\dimexpr(\ht0-\ht2)/2\relax\box2 }
\theoremstyle{plain}
\newtheorem{theorem}{Theorem}[section]
\newtheorem{lemma}[theorem]{Lemma}
\newtheorem{conjecture}{Conjecture}[section]
\theoremstyle{definition}
\title{Establishing Conditions on the Degree of Regularity of Linear Homogeneous Equations}
\author{Nathan Johns}
\date{\AdvanceDate[-1]\today}
\begin{document}

\maketitle

\begin{abstract}
In 1933, Rado conjectured that for any positive integer $n$, there is always a linear homogeneous equation with degree of regularity $n.$ In proving this conjecture, Alexeev and Tsimerman, and independently Golowich, found that some equations in $n$ variables have degree of regularity $n-1$ for any value of $n.$ Their work left many questions as to how and which other properties of equations are closely tied to the degree of regularity, and if there is a simpler or more effective way of thinking about it. In this paper, we answer some of these questions, prove that various families of linear homogeneous equations in $n$ variables have degree of regularity $n-1$, and establish some conditions under which this property holds.
\end{abstract}
 
\section{Introduction}

Since 1927, Van der Waerden's theorem [1], which concerns assigning ``colors" to each positive integer, has been a fundamental result used in the study of Ramsey Theory and combinatorics. A coloring with $r$ colors, or an \textit{$r$-coloring}, of the positive integers is any function $C: \mathbb{N} \rightarrow [1, r]$, where $[1,r]$ denotes the set of positive integers from 1 to $r$. Hence, each of the first $r$ positive integers are considered the ``colors" of our coloring.

\begin{theorem}[Van der Waerden's Theorem]
Given any positive integers $r,k$, there is a positive integer $n_0$ such that, for all $n \geq n_0$, any $r$-coloring of $[1,n]$ produces a monochromatic arithmetic progression of length $k.$
\end{theorem}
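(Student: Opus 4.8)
The plan is to prove the theorem by a double induction: an outer induction on the progression length $k$, and, inside the inductive step, a second induction that assembles an ever-larger family of monochromatic progressions sharing a single common extension point. The base cases of the outer induction are immediate: for $k=1$ any integer works, and for $k=2$ one may take $n_0 = r+1$, since among the first $r+1$ integers two must share a color by the pigeonhole principle. So I would fix $k \geq 3$, assume as outer hypothesis that Van der Waerden's theorem already holds for length $k-1$ and every number of colors, and aim to force a monochromatic progression of length $k$.

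The organizing notion is that of \emph{color-focused} progressions. Writing an arithmetic progression of length $k-1$ as $A = \{a, a+d, \dots, a+(k-2)d\}$, I call a collection $A_1, \dots, A_s$ of such progressions \emph{focused at} $f$ if extending each by one further term lands on the same integer, that is $a_i + (k-1)d_i = f$ for every $i$; they are \emph{color-focused} if, in addition, each $A_i$ is monochromatic and the $s$ progressions carry $s$ pairwise distinct colors. The point of this definition is the terminal case: if an interval contains $s=r$ color-focused progressions of length $k-1$ together with their common focus $f$, then $C(f)$ is one of the $r$ colors, so it must agree with the color of some $A_i$; appending $f$ to that $A_i$ then yields a monochromatic progression of length $k$, which is exactly what we want.

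To produce such a fan I would run the inner induction on $s$, proving that for each $s \in \{1, \dots, r\}$ there is an integer $N_s$ such that every $r$-coloring of an interval of length $N_s$ contains either a monochromatic progression of length $k$ or $s$ color-focused progressions of length $k-1$ whose common focus also lies in the interval. The base case $s=1$ is the outer hypothesis applied at $k-1$ (with a little extra length to house the focus). For the step from $s$ to $s+1$, I would partition a long interval into consecutive blocks of length $N_s$; inside each block the inner hypothesis yields $s$ color-focused progressions, or we are already done. Regarding the full coloring pattern of a block as a single ``super-color'' — there are at most $r^{N_s}$ of them — the outer hypothesis applied to this $r^{N_s}$-coloring of the blocks supplies a monochromatic progression of $k-1$ equally spaced, identically colored blocks, with some block-to-block displacement $D$. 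Taking the fan focused at $f$ inside the first of these blocks, I would then form $s$ ``slanted'' progressions (the $i$-th using difference $d_i + D$ and stepping through the successive identical copies, so it stays color $c_i$) together with one brand-new progression $f, f+D, \dots, f+(k-2)D$ of color $C(f)$; all $s+1$ of these share the common focus $f + (k-1)D$. Either $C(f)$ repeats a color $c_i$, in which case $A_i$ extended by $f$ is already a monochromatic $k$-progression, or $C(f)$ is new and we have gained one color-focused progression. Running this up to $s=r$ closes the outer induction.

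The step from $s$ to $s+1$ is where I expect the real work to lie, and two separate points need care. First, the bookkeeping of lengths: the block size, the number of blocks dictated by the Van der Waerden number at $k-1$, and the additional room needed so that the new focus $f+(k-1)D$ still lies inside the interval all have to be chosen consistently, and these nested choices are what make the resulting bound on $n_0$ grow enormously fast. Second, one must verify honestly that the slanted progressions remain monochromatic — this is exactly where the identical colorings of the $k-1$ equally spaced blocks are used — and that the construction genuinely produces a progression of a new color unless a monochromatic $k$-progression has already appeared. Once these are pinned down, existence of each $N_s$, and hence of $n_0 = N_r$, follows at every stage.
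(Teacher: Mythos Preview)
The paper does not supply its own proof of this statement: Theorem~1.1 is simply quoted as Van der Waerden's classical 1927 result and cited to reference~[1], with no argument given. So there is no ``paper's proof'' to compare your proposal against.

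For what it is worth, your outline is the standard color-focused (fan) argument and is correct in structure. The outer induction on $k$, the inner induction on the number $s$ of color-focused progressions, the block-coloring trick with $r^{N_s}$ super-colors, and the dichotomy ``either $C(f)$ repeats a color and we win, or we gain a new color in the fan'' are exactly the ingredients of the classical proof. The two points you flag as needing care --- the length bookkeeping guaranteeing the new focus $f+(k-1)D$ stays in the interval, and the verification that the slanted progressions with differences $d_i+D$ remain monochromatic because the $k-1$ blocks are identically colored --- are indeed the places where a fully written proof has to be explicit, but there is no hidden obstacle in either. Had the paper attempted a proof, this is the one it would most likely have given.
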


In 1933, Richard Rado [2] further developed Van der Waerden's findings, applying them to the partition regularity of systems of linear
homogeneous equations, which concerns the properties of the cells of partitions of the solution sets of such equations. We consider a linear homogeneous equation to be any equation in $n$ variables of the form
\[a_1x_1 + ... + a_nx_n = 0 \label{eq1} \tag{1}\]
with $a_i \in \mathbb{Z} \backslash \{0\}$. If, given any coloring of the integers and a linear homogeneous equation, $S$, of the form \eqref{eq1}, we can always find $n$ monochromatic integers $x_1, x_2, . . . , x_n$ that satisfy $S$, then we say $S$ is \textit{regular}, sometimes referred to as \textit{partition regular.}  
\begin{theorem}[Rado's Theorem]

A linear homogeneous equation $a_1x_1 + ... + a_nx_n = 0$ is regular if and only if there exists a nonempty subset of coefficients $A = \{a_i, ..., a_j\}$ such that $\sum_{a_i \in A} a_i = 0$. 
\end{theorem}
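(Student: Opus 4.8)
The plan is to prove the two implications separately: I would handle the necessity of the subset condition (``regular $\Rightarrow$ some subset sums to zero'') by exhibiting an explicit coloring, and the sufficiency (``some subset sums to zero $\Rightarrow$ regular'') by leveraging Van der Waerden's Theorem.

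For necessity I would argue the contrapositive: assuming that \emph{no} nonempty subset of the coefficients sums to zero, I construct a finite coloring of $\mathbb{N}$ with no monochromatic solution. First fix a prime $p > \sum_{i=1}^n |a_i|$ and color each $x \in \mathbb{N}$ by its least significant nonzero base-$p$ digit; that is, writing $x = p^{v}u$ with $p \nmid u$, set $C(x) = (u \bmod p) \in \{1,\dots,p-1\}$, a $(p-1)$-coloring. Given a putative monochromatic solution $x_1,\dots,x_n$ of common color $c$, let $v = \min_i v_p(x_i)$ and let $I_0 = \{\, i : v_p(x_i) = v \,\}$, which is nonempty. Dividing $\sum a_i x_i = 0$ by $p^{v}$ and reducing modulo $p$ annihilates every term with $i \notin I_0$ and leaves $c\sum_{i \in I_0} a_i \equiv 0 \pmod p$; since $\gcd(c,p)=1$ this forces $\sum_{i\in I_0} a_i \equiv 0 \pmod p$, and the bound $p > \sum|a_i|$ upgrades this to $\sum_{i\in I_0} a_i = 0$ in $\mathbb{Z}$, contradicting the hypothesis. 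This direction should be essentially complete once the valuation bookkeeping is written carefully.

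For sufficiency, suppose a nonempty $I$ with $\sum_{i\in I} a_i = 0$ exists and set $c = \sum_{j \notin I} a_j$. If $c=0$ then every coefficient-sum vanishes and the constant tuple $x_1 = \dots = x_n = 1$ is a monochromatic solution, so assume $c \neq 0$. The idea is to realize a monochromatic solution inside a long monochromatic arithmetic progression supplied by Van der Waerden's Theorem. Writing the terms as $\lambda, \lambda + \mu, \dots, \lambda + k\mu$, I would place the variables indexed by $J = [n]\setminus I$ at the base point and place each variable indexed by $I$ at a progression term $x_i = \lambda + \alpha_i \mu$; because $\sum_{i\in I} a_i = 0$ the base point contributes nothing from $I$, and the equation collapses to the single scalar condition $\sum_{i\in I} a_i \alpha_i = -c\lambda/\mu$.

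The hard part will be guaranteeing that this residual condition is solvable in nonnegative integers, since Van der Waerden's Theorem produces \emph{some} monochromatic progression but gives no control over the ratio $\lambda/\mu$ or over the divisibility $\mu \mid c\lambda$; indeed a single progression does not suffice (for $2x_1 - 2x_2 + 3x_3 = 0$ the only zero-sum subset has even coefficients while $c=3$ is odd). I expect to resolve this by upgrading the tool: rather than one progression I would use the richer monochromatic configurations that Van der Waerden's Theorem yields by color focusing---a Brauer-type configuration $\{\, d, a, a+d, \dots, a+kd \,\}$, or more generally a monochromatic $(m,p,c)$-set---which supplies enough independent monochromatic values, together with the freedom to prescribe the needed divisibilities by first dilating the coloring, to represent $-c$ as the required combination of the $a_i$. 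Making this configuration precisely rich enough to absorb the coefficients of the zero-sum subset is the crux of the argument and the step I would spend the most care on; once it is in place, substituting the configuration's values for the $x_i$ produces a monochromatic solution and completes the equivalence.
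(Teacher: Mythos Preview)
The paper does not prove Rado's Theorem at all: it is stated as Theorem~1.2 in the introduction and attributed to Rado's 1933 paper, then used as background. So there is no ``paper's own proof'' to compare your proposal against; you have written a sketch of a classical result that the author simply quotes.

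On the merits of your sketch itself: the necessity direction is correct and is the standard argument (this is essentially the same $p$-adic/first-nonzero-digit coloring that the paper later exploits in Theorem~3.1). For sufficiency you have correctly located the genuine difficulty --- a single Van der Waerden progression does not give the needed divisibility, as your $2x_1-2x_2+3x_3=0$ example shows --- and you point in the right direction by invoking Brauer configurations and $(m,p,c)$-sets. But the proposal stops exactly where the work begins: you still have to specify which enriched monochromatic structure you use, prove that structure is partition regular, and show precisely how to read off a solution from it. As written, the sufficiency half is a plan rather than a proof; completing it amounts to reproving a nontrivial chunk of Rado's theorem (typically via Deuber's theorem on $(m,p,c)$-sets or an iterated Van der Waerden argument), which is substantially more than the paper ever undertakes.
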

In the case that a linear homogeneous equation is not regular, we can classify it further using a generalized notion of partition regularity, $r$\textit{-regularity}, defined as follows: 
for any positive integer $r$ and linear homogeneous equation $S$, $S$ is
\textit{r-regular} if and only if, for every coloring of the positive integers with exactly $r$ colors, there is a
monochromatic solution $x_1, x_2, . . . , x_n$ to the equation. Using the notion of $r$-regularity, we can classify non-regular linear homogeneous equations by their \textit{degree of regularity}, the greatest positive integer $a$ such that the equation
is $a$-regular. 

This all led to the following conjecture made by Rado [2]:
\begin{conjecture}
For every positive integer $n$, there is always at least $1$ linear
homogeneous equation with degree of regularity $n.$
\end{conjecture}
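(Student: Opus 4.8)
The plan is to prove the conjecture constructively: for each positive integer $n$, I will exhibit an explicit linear homogeneous equation $E_n$ and show that its degree of regularity is exactly $n$. Since degree of regularity $n$ means that $E_n$ is $n$-regular but not $(n+1)$-regular, the argument splits into a lower bound and an upper bound, which I will treat separately. The coefficients of $E_n$ will be chosen with rapidly growing (for instance, geometrically separated) magnitudes so that both thresholds can be controlled by the same construction.

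For the upper bound, showing that $E_n$ is \emph{not} $(n+1)$-regular, the strategy is to produce a single explicit $(n+1)$-coloring of the positive integers admitting no monochromatic solution. The natural tool is a coloring induced by base-$b$ representations for a suitably chosen base $b$: color each integer according to a function of its most significant digit-block or its $b$-adic valuation. If the coefficients $a_i$ grow geometrically, then in any candidate monochromatic solution the dominant term cannot be cancelled by the remaining terms, forcing a magnitude or carrying contradiction. The number of color classes this requires is precisely $n+1$, which pins down the upper bound on the degree of regularity.

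For the lower bound, showing that $E_n$ \emph{is} $n$-regular, I would argue that every $n$-coloring of the positive integers is too coarse to avoid a monochromatic solution. The idea is to invoke Van der Waerden's Theorem to extract, within some color class, a long monochromatic arithmetic progression, and then to locate a solution of $E_n$ among the translates and dilates that such a progression makes available. Since $E_n$ is deliberately chosen \emph{not} to be regular, Rado's Theorem tells us its coefficients admit no nonempty zero-sum subset, so full partition regularity is unavailable; instead the coefficients must be calibrated so that any $n$ color classes still leave enough arithmetic structure, extracted by an inductive pigeonhole argument on the colors, to realize a monochromatic solution.

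The main obstacle will be the lower bound, i.e.\ verifying $n$-regularity. Because $E_n$ fails to be regular, no general theorem supplies monochromatic solutions, so $n$-regularity must be wrung out of the precise interaction between the chosen coefficients and the branching behavior of an arbitrary $n$-coloring. The delicate point is calibrating the coefficients so that the same equation simultaneously survives every $n$-coloring yet is defeated by a specific $(n+1)$-coloring; making these two thresholds coincide exactly is the heart of the problem and the step most likely to demand careful, equation-specific combinatorial analysis.
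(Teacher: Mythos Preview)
Your outline matches the paper's strategy closely: the paper also chooses an equation whose coefficients are powers of a prime $p$ (for instance $a_i=\pm p^{\,i-1}$ in Theorems~3.2 and~3.3), establishes non-$n$-regularity by coloring each positive integer $x$ with $O_p(x)\bmod n$ (Theorem~3.1---exactly your ``$b$-adic valuation'' idea), and establishes $(n-1)$-regularity via a Van der Waerden--based argument combined with pigeonhole on the $n$ powers $p^0,\dots,p^{n-1}$ under an $(n-1)$-coloring.

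What you have, however, is a plan rather than a proof: you explicitly leave the lower bound as an unresolved ``main obstacle'' requiring ``equation-specific combinatorial analysis.'' The device the paper uses to close precisely this gap is Lemma~2.1, a strengthening of Van der Waerden that guarantees, for any $r$-coloring and any prescribed integers $q,M$, a monochromatic arithmetic progression of length $2M+1$ with common difference $d$ \emph{together with} the single element $qd$ in that same color class. That extra element is the crucial ingredient: one sets most variables equal to $p^{\,n-1}d$ and the remaining two to points $p^{j}b+\lambda_1 d$ and $b+\lambda_2 d$ of the progression, and then solves two linear conditions for $\lambda_1,\lambda_2$ within the allowed range. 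Your phrase ``locate a solution among the translates and dilates that such a progression makes available'' gestures toward this, but without committing to a concrete equation and either proving or invoking a lemma of the Lemma~2.1 type, the lower-bound half of your proposal is not yet an argument.
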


The conjecture received very little attention in the years following the publication of Rado's work, but thanks to a recent resurgence in the study of partition regularity, it was finally proven in 2009 by Alexeev and Tsimerman [3]. Since 2009, both Alexeev and Tsimerman, and Golowich [4], independently, have found distinct families of equations in $n$ variables with degree of regularity $n-1$, thus verifying Rado's conjecture. However, recent results have prompted various new questions related to the degree of regularity of systems of equations. For example, under what conditions does a linear homogeneous equation in $n$ variables have degree of regularity equal to $n-1$? 

In addition to finding more general systems of equations that verify Rado's conjecture, we will prove 2 sufficient conditions for any linear homogeneous equation: the first to not be $n$-regular, and the second to be $(n-1)$-regular.
\section{Important Lemmas}

For the following results, we must understand the notion of a homogeneous family of subsets of $\mathbb{N}$. We define a homogeneous family of subsets of $\mathbb{N}$  as a family $S \subset \mathcal{P}(\mathbb{N})$ such that for all $s \in S$ and k $\in \mathbb{N},$ $ks \in S$. In other words, if a number is in $S$ then every positive integer multiple of it is also in $S$. If we consider the solution set of any linear homogeneous equation as a family of subsets of $\mathbb{N}$, it can be seen quite easily that such a family fits the above definition of homogeneous. The following result was proven by Golowich in 2014 [4]:
\begin{lemma}
Let $S$ be a homogeneous family of subsets of $\mathbb{N}$ which is $r$-regular. Let $q$ and $M$ be positive integers. Given any $r$-coloring of the positive integers, there is a subset of the positive integers $B \in S$ and a positive integer $d$ such that, for every positive integer $b \in B$ and $l \in \mathbb{Z}$ such that $|l| \leq M$, 
$b+ld$ and $qd$ are colored with the same color.
\end{lemma}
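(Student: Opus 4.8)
The plan is to isolate the three independent demands made by the conclusion --- that $B$ be a genuine member of $S$, that each window $\{b+ld : |l|\le M\}$ be monochromatic, and that this common color be exactly $c(qd)$ --- and to meet each with a different tool: the $r$-regularity of $S$ for membership, Van der Waerden's theorem for the windows, and the homogeneity of $S$ together with a ``$q$-twist'' of the coloring for the anchor. A useful organizing reduction uses homogeneity to tie the common difference to a scaling factor. If I can produce a positive integer $e$ and a set $B_0 \in S$ such that, under the recoloring $c_e(n) := c(en)$, every unit window $\{b+l : |l|\le M\}$ about a point $b \in B_0$ is constant with common color $c_e(q)$, then scaling by $e$ (taking $B := eB_0 \in S$ by homogeneity and $d := e$) yields $c(b'+ld) = c(qd)$ for all $b' \in B$ and $|l| \le M$. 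Thus it suffices to solve the problem with the step normalized to $1$ inside a coloring $c_e$ that I remain free to choose through $e$.

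For the base case $M=0$ a direct argument already finishes and illustrates how the anchor is controlled. Given the coloring $c$, define the $r$-coloring $c_q(n) := c(qn)$. Since $S$ is $r$-regular, there is $B_1 \in S$ monochromatic for $c_q$, say $c(qb) = \gamma$ for every $b \in B_1$. Put $B := qB_1$, which lies in $S$ by homogeneity, and choose $d$ to be any element $b_\ast \in B_1$. Then every element of $B$ has the form $qb$ with $c(qb) = \gamma$, while $c(qd) = c(qb_\ast) = \gamma$, so the $M=0$ conclusion holds. The essential point is that pre-composing with multiplication by $q$ and then scaling by $q$ forces the color of the monochromatic set to coincide with the color of a multiple of $q$, which is precisely what the anchor $c(qd)$ records.

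To incorporate the windows I would induct on $M$, feeding Van der Waerden's theorem one extra shift-layer at a time while using homogeneity to keep the selected configuration inside $S$. Van der Waerden's theorem, which I may invoke with as many colors as convenient since it is a statement about the coloring alone and does not draw on the regularity of $S$, supplies arbitrarily long monochromatic progressions and hence candidate common differences; the $r$-regularity of $S$ then selects the members $B$; and the scaling reduction above keeps the step, the windows, and the anchor expressed through a single $r$-coloring, so that no step of the argument ever needs more than $r$ colors --- a genuine constraint, since $S$ is only assumed $r$-regular and need not be $r'$-regular for any $r' > r$.

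I expect the main obstacle to be exactly the coordination forced by the quantifier order: a single common difference $d$ must work simultaneously for every $b \in B$ and for the anchor $qd$. One cannot simply apply regularity to obtain $B$ and afterwards hunt for a good $d$, because a difference that monochromatizes the window about one center need not do so about another, and the centers handed back by regularity are not under my control. The resolution is to interleave the two ingredients: the shift structure must be built into the recoloring to which regularity is applied, so that membership and window-constancy are secured together, with homogeneity providing the scaling that locks the common difference, the windows, and the anchor color to one another. Making this interleaving precise --- in particular arranging that the Van der Waerden difference and the $q$-twisted anchor color agree on the set produced by regularity --- is the technical heart of the proof.
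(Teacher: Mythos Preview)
Your $M=0$ base case is correct, and the $q$-twist there is exactly the right device for pinning the anchor color. The gap is the inductive step: you propose to induct on $M$ but never carry out $M \to M+1$, and you yourself flag this as ``the technical heart of the proof.'' I do not see how to close it along the lines you sketch. The pair $(B,d)$ produced by the $M$-case has no reason to support windows of radius $M+1$; you cannot feed $r$-regularity a product coloring encoding, say, $(c(n-d),c(n),c(n+d))$ before $d$ is known; and a fresh application of Van der Waerden returns a new common difference unrelated to the $d$ already fixed, so ``one extra shift-layer at a time'' does not glue. Your scaling reduction $c_e(n)=c(en)$ is valid but is only a change of variables --- it reformulates the problem with $d=e$ rather than lowering its difficulty.

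The paper does not itself prove this lemma (it is quoted from Golowich), but it displays the method in its proof of Lemma~2.2, and that method inducts on the number of colors $r$, not on $M$. With $r{+}1$ colors one first invokes Van der Waerden to obtain a monochromatic progression whose length is dictated by $M$ together with the finitary bound $T$ coming from the inductive hypothesis; one then tests the first $T$ candidate anchors against the progression's color. Either some anchor matches --- take $d$ to be the corresponding multiple of the progression's step and finish --- or all $T$ anchors avoid that color and therefore live in an $r$-colored homothetic copy of $\mathbb{N}$, to which the inductive hypothesis applies (homogeneity of $S$ is exactly what keeps the family available after rescaling). The dichotomy ``match the anchor or eliminate a color'' is what drives induction on $r$; there is no comparable dichotomy for $M$. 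I would recast your argument along these lines rather than pursue the $M$-induction.
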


The result has been shown to be a very useful tool in proving the $(n-1)$-regularity of equations in $n$ variables, as one can always guarantee that, given an $(n-1)$-coloring, some pair of coefficients $(a_i, a_j)$ must be monochromatic. While we will use Lemma 2.1 to prove the $(n-1)$-regularity of specific families of equations, we will also provide the proof of a similar result that will be key in our determining a sufficient condition for $(n-1)$-regularity. 

The proof of the following result uses the same method used by Golowich [4] in the proof of Lemma 2.1:

\begin{lemma} \label{lemma1}
Let $q$ and $M$ be any positive integers. Given any $r$-coloring of the positive integers, there exist $k, d > 0$ such that the arithmetic progression defined by 
\[k + \lambda d \ \ : \ \ \lambda \in \mathbb{Z}, |\lambda| \leq M  \tag{2} \label{eq2}\]
and
\[q + d \tag{3} \label{eq3}\]
are colored with the same color.
\end{lemma}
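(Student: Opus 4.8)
The plan is to recognize the desired configuration as a \emph{Brauer-type} pattern: an arithmetic progression of length $2M+1$, namely the terms $k+\lambda d$ with $|\lambda|\le M$, together with the single ``offset'' point $q+d$, all carrying one color. Exactly as in Golowich's proof of Lemma 2.1, the engine will be Van der Waerden's theorem (Theorem 1.1): I would first use it to manufacture arbitrarily long monochromatic progressions, and then exploit the freedom in the common difference $d$ to force the extra point $q+d$ into the same color class. Note that the additive offset here ($q+d$) prevents a direct appeal to Lemma 2.1, whose offset is multiplicative ($qd$); so the argument must re-run Golowich's underlying method rather than cite the conclusion.

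First I would reduce the statement to the following: produce a positive integer $d$ and a monochromatic progression of length $2M+1$ with common difference $d$ whose color equals $C(q+d)$. The reduction rests on one observation: if the final common difference is taken to be a multiple $d=me$ of some base difference $e$, and if there is a long monochromatic progression $a,a+e,\dots,a+(L-1)e$ of color $c$, then the $2M+1$ terms $a+(Mm+\lambda m)e$ for $|\lambda|\le M$ form a monochromatic progression of difference $me=d$ and color $c$ automatically. Choosing $L\ge 2Mm+1$ and $a$ large enough makes all these terms, as well as $k$ and $d$, positive. Thus the AP part is essentially free, and the whole difficulty concentrates on the single requirement $C(q+me)=c$.

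To meet that requirement I would bring in a second application of Van der Waerden's theorem, this time to the induced $r$-coloring $m\mapsto C(q+me)$ of multiples of $e$, producing a long monochromatic run in the ``offset line'' $\{q+me\}$. The remaining task is to synchronize the two structures: to select one color $c$ and one scale $d=me$ that is simultaneously the difference of a monochromatic AP of color $c$ \emph{and} an index with $C(q+me)=c$. I would accomplish this with the color-focusing (iteration-on-colors) argument that underlies both Van der Waerden's and Golowich's proofs: maintain a growing fan of monochromatic progressions of pairwise distinct colors, all focused at a common point and each carrying controlled offset data $q+d_i$; at each stage either two members of the fan share a color, yielding the match, or the fan grows by one, so after at most $r$ stages a repeated color is forced and the desired single class containing both the progression and the offset point appears.

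The main obstacle is precisely this synchronization. The color of $q+d$ is pinned down by the actual value of $d$ and cannot be chosen freely, so a monochromatic progression handed to us by Van der Waerden's theorem carries no guarantee that $C(q+d)$ matches its color; a plain ``take any long progression'' strategy fails here. Overcoming it requires carrying the offset condition $C(q+d_i)=(\text{color of }P_i)$ as an invariant through the entire focusing iteration, and the delicate bookkeeping is keeping the AP condition and the offset condition synchronized while the fan of colors is assembled. Once that invariant survives to a repeated color, reading off $k$, $d$, and the common color completes the proof.
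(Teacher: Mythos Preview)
Your proposal is correct and is essentially the paper's approach: the paper packages the color-focusing as a straight induction on the number of colors $r$, using Van der Waerden to produce a monochromatic progression of length $2TM+1$ with difference $d'$, and then either some $q+cd'$ with $1\le c\le T$ matches its color (take $d=cd'$, so the sub-progression of step $cd'$ still has $2M+1$ terms) or the $T$ offset points $q+d',\dots,q+Td'$ avoid that color, use at most $r-1$ colors, and the induction hypothesis applied to this affine copy of $[1,T]$ finishes. Your ``second application of Van der Waerden to the offset line'' is superfluous---the simple dichotomy just described already supplies the color reduction, and your fan language is precisely this induction unrolled.
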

\begin{proof}
Let $q$ and $M$ be positive integers. We will prove Lemma 2.2 by induction. Let $r$ be the number of colors in our coloring. The case $r=1$ is clearly trivial, since there is only one color. Letting $s \in \mathbb{N}$ be given, assume that Lemma 2.2 holds for $r=s$; that there is a positive integer $T$ so that if we color the integers $[1,T]$ with $s$ colors, there are $k,d > 0$ satisfying \eqref{eq2} and \eqref{eq3}. By Theorem 1.1, given $s+1, 2TM+1$, there exists a positive integer $E_0$ such that, for all $E \geq E_0$, any $(s+1)$-coloring of the integers $[1,E]$ produces a monochromatic arithmetic progression of length $2TM + 1.$ Thus we have the following monochromatic arithmetic progression, letting $k,d \in \mathbb{N}$, where $k$ is the center of the progression:
\[k + \lambda d' \ \ : \ \ \lambda \in \mathbb{Z}, |\lambda| \leq TM \tag{4} \label{eq4}\]
If there is an integer $c \leq T$ where $q + cd'$ is the same color as in \eqref{eq4}, then we let $d = cd'$, guaranteeing that \eqref{eq2} and \eqref{eq3} are satisfied. If there is no such $c,$ then all of the $T$ positive integers $q + d', q + 2d', ..., q + Td'$ are colored using the remaining $s$ colors. We know, then, by the induction hypothesis, that \eqref{eq2} and \eqref{eq3} must be satisfied. By the principle of induction, Lemma 2.2 holds for any $r$-coloring of the positive integers.
\end{proof}

\section{Results}
Our first result provides a sufficient condition for a linear homogeneous equation in $n$ variables to not be $n$-regular using the notion of $p$-adic order for a prime $p$. The idea of using $p$-adic valuations in the context of colorings was first used by Fox and Radoicic [5]. 
We present $p$-adic order through a family of functions $\{O_p: \mathbb{Z} \rightarrow \mathbb{N}\}$, defined below: 

\[   O_p(x) = \left\{\begin{array}{ll}\textnormal{max}\{v \in \mathbb{N}: p^v \, | \, x\} ~ \textnormal{ if } n \neq 0 \\ \infty ~~~~~~~~~~~~~~~~~~~~~ \, \kern 0.497em  \textnormal{ if } n = 0 \end{array} \right. ,\]
the $p$-adic order of $x$, where $p$ is a prime positive integer.  

Recall the following 2 important properties of $p$-adic order, given a prime $p$ and integers $a$ and $b$: \\ \\
$1. ~ O_p(ab) = O_p(a) + O_p(b)$ \\
$2. ~ O_p(a+b) \geq $ min$\{O_p(a),O_p(b)\}$, where equality occurs when $a \neq b$. 

Using $p$-adic order, we find an $n$-coloring that does not allow for a monochromatic solution if the equation satisfies a certain condition. 
\begin{theorem}
For any linear homogeneous equation

\[a_1x_1 + ... + a_nx_n = 0 \]
if there is any prime positive integer $p$ such that $O_p(a_i) \equiv O_p(a_j)$ mod $n$ implies $i = j$ for all $1 \leq i,j \leq n$, the equation is not $n$-regular.
\end{theorem}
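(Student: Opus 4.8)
The plan is to exhibit an explicit $n$-coloring of the positive integers that admits no monochromatic solution, which by the definition of $r$-regularity shows the equation fails to be $n$-regular. The hypothesis says precisely that the residues $O_p(a_1), \dots, O_p(a_n)$ are pairwise distinct modulo $n$; since there are exactly $n$ of them, they form a complete residue system modulo $n$. This strongly suggests coloring each positive integer $x$ by its $p$-adic order modulo $n$, that is $C(x) = O_p(x) \bmod n$, which takes values in $\{0, 1, \dots, n-1\}$. First I would verify that this genuinely uses all $n$ colors (for instance $C(p^c) = c$ for each $0 \le c \le n-1$), so that it is a valid $n$-coloring.

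Next I would suppose, toward a contradiction, that $x_1, \dots, x_n$ is a monochromatic solution, so that $O_p(x_1) \equiv \dots \equiv O_p(x_n) \pmod n$, say all congruent to some fixed residue $c$. Applying property 1, $O_p(a_i x_i) = O_p(a_i) + O_p(x_i) \equiv O_p(a_i) + c \pmod n$. Because the $O_p(a_i)$ are pairwise distinct modulo $n$, so are the shifted values $O_p(a_i) + c$, and hence the integers $O_p(a_1 x_1), \dots, O_p(a_n x_n)$ are pairwise distinct modulo $n$ and therefore pairwise distinct as integers. In particular there is a unique index $k$ attaining the minimum of the $O_p(a_i x_i)$.

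The final step is to invoke property 2 in its strict form: when a single term of a sum has strictly smaller $p$-adic order than every other term, the order of the sum equals that minimum. Writing $\sum_i a_i x_i = a_k x_k + \sum_{i \neq k} a_i x_i$, the tail satisfies $O_p\!\left(\sum_{i \neq k} a_i x_i\right) \ge \min_{i \neq k} O_p(a_i x_i) > O_p(a_k x_k)$, so the equality case forces $O_p\!\left(\sum_i a_i x_i\right) = O_p(a_k x_k)$, which is finite. But $\sum_i a_i x_i = 0$, whose $p$-adic order is $\infty$, a contradiction. Hence no monochromatic solution exists under this coloring.

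I expect the main subtlety to lie in the strict-equality case of property 2 once more than two terms are involved: the paper states it only for a sum of two integers, so I would promote it to $n$ terms either by a short induction or, as above, by applying the two-term version to $a_k x_k$ and the remaining sum. The crucial point is that it is exactly the \emph{uniqueness} of the minimizing index $k$ that licenses the equality rather than the mere inequality, so I would be careful to keep that uniqueness — itself a consequence of the distinct-residues hypothesis — explicit at this stage.
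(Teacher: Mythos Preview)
Your proposal is correct and follows essentially the same approach as the paper: color by $O_p(x) \bmod n$, observe that monochromaticity forces the $O_p(a_ix_i)$ to be pairwise distinct, and derive a contradiction from $O_p(0)=\infty$. If anything, you are more careful than the paper in checking that all $n$ colors actually occur and in justifying the equality case of property~2 for a sum of more than two terms.
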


\begin{proof}
We proceed by showing that there is a $n$-coloring such that such an equation has no monochromatic solution, no matter the value of $n.$ Let $E_n$ be a linear homogeneous equation, and let $p$ be a positive prime integer such that $O_p(a_i) \equiv O_p(a_j)$ mod $n$ implies $i = j$ for all $1 \leq i,j \leq n.$ In other words, all $O_p(a_i)$ are distinct mod $n$. We claim that the coloring $C_n: \mathbb{N} \rightarrow [1,r]$ defined by $(r) = O_p(r)$ mod $n$ has no monochromatic solution to $E_n$. 

For the sake of contradiction, let us suppose that the coloring $C_n$ allows for a monochromatic solution to $E_n.$ For $x_1, ..., x_n$ to be the same color, we must have, for some positive integer $m < n$, that $O_p(x_i) \equiv m$ mod $n$ for all $i \leq n.$ Thus, we let $O_p(x_i) = m + k_in$ for all $i \leq  n,$ where $k_i$ are positive integers. By property 1, then, the $p$-adic order of each term of $E_n$ is given by $O_p(a_ix_i) = m + k_in + O_p(a_i)$. 

Thus, given any $i,j \leq n$, for $a_ix_i$ and $a_jx_j$ to have the same $p$-adic order, we must have that
\[k_in + O_p(a_i) = k_jn + O_p(a_j),\]
which would mean that 
\[O_p(a_i) \equiv O_p(a_j) \textnormal{ mod } n\]
By contradiction, each $O_p(a_ix_i)$ is unique. It follows, then, by Property 2, that the $p$-adic order of the left-hand side of $E_n$ equals min$\{O_p(a_1x_1), ..., O_p(a_nx_n)\}$, which must be a nonnegative integer. However, by definition, $O_p(0) = \infty$. 

By contradiction, there is no monochromatic solution to $E_n$ under $C_n$.
\end{proof} 

Theorem 3.1 provides us with a sufficient condition for an equation in $n$ variables not to be $n$-regular. As such, we will use it to verify that certain equations we can show to be $(n-1)$-regular are indeed of degree of regularity $(n-1).$ \\

\begin{theorem}
For all integers $n \geq 2$, the equations 
\[x_1 - 2x_2 + ... \ \rpm \ 2^{n-1}x_n = 0 \label{eq5} \tag{5}\]
are $(n-1)$-regular but not $n$-regular.
\end{theorem}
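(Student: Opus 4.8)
The plan is to prove the two halves separately: the failure of $n$-regularity will drop out of Theorem 3.1 with essentially no computation, while the real work is the $(n-1)$-regularity.

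For the non-$n$-regularity, I would apply Theorem 3.1 with the prime $p = 2$. Every coefficient of \eqref{eq5} has the form $\pm 2^{i-1}$, so $O_2(a_i) = i-1$ for $1 \le i \le n$, the sign being irrelevant to the $2$-adic order. The residues $0, 1, \dots, n-1$ are pairwise distinct modulo $n$, so $O_2(a_i) \equiv O_2(a_j) \pmod n$ forces $i-1 \equiv j-1 \pmod n$ and hence $i = j$. Thus the hypothesis of Theorem 3.1 is met and \eqref{eq5} is not $n$-regular; since $r$-regularity implies $(r-1)$-regularity, this already shows the degree of regularity is at most $n-1$.

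It then remains to show \eqref{eq5} is $(n-1)$-regular, i.e.\ that every $(n-1)$-coloring of $\mathbb{N}$ admits a monochromatic solution. The guiding observation is that, since all coefficients are powers of two, a signed sum of powers of two can be forced to vanish by cancelling equal powers in pairs; for instance $(x_1,x_2,x_3) = (4t,4t,t)$ solves $x_1 - 2x_2 + 4x_3 = 0$, which illustrates that suitable solutions of \eqref{eq5} live inside a family of values related by factors of two. I would organize the argument as an induction on $n$ with the trivial base case $n = 2$ (the equation $x_1 - 2x_2 = 0$ is $1$-regular), and in the inductive step invoke Lemma \ref{lemma1} (or, for the homogeneous solution family, Lemma 2.1) with the modulus $q$ chosen as a power of two matching the doubling in the coefficients and with $M$ taken large enough to accommodate the shifts $\lambda$ demanded by the chosen solution. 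Given an $(n-1)$-coloring, the lemma produces a long monochromatic progression together with a second related monochromatic value, and I would then select the entries of a solution of \eqref{eq5} from this monochromatic set so that the large coefficients are absorbed by the pairing of equal powers of two while the flexibility of the progression handles the remaining terms.

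The main obstacle, I expect, is precisely this scale-matching. A single monochromatic progression lives at one scale, whereas \eqref{eq5} couples its variables across the $n$ distinct scales $1, 2, \dots, 2^{n-1}$, so one cannot merely substitute progression terms into the equation and cancel the leading term; the uncontrolled ratio of the progression's center to its common difference defeats any naive substitution. Overcoming this is exactly where the van der Waerden input inside the lemma is essential, as it forces—using only $n-1$ colors—a second value at the scale dictated by the top coefficient $2^{n-1}$ to share the progression's color, and it is here that the count $n-1$ (rather than fewer) colors is used sharply. Verifying that the entries of the explicit solution can always be realized simultaneously within the lemma's output is the crux; once that is done, combining it with the non-$n$-regularity established above pins the degree of regularity at exactly $n-1$.
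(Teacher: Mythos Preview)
Your non-$n$-regularity argument via Theorem~3.1 with $p=2$ is correct and identical to the paper's.

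For $(n-1)$-regularity there is a genuine gap. You frame the argument as an induction on $n$, but nothing in your sketch says what the inductive hypothesis would buy: that the $(n-1)$-variable equation is $(n-2)$-regular gives no leverage on an $(n-1)$-coloring of $\mathbb{N}$ for the $n$-variable equation, since you cannot simply drop a color. The paper does not induct on $n$ at all. The idea you are missing is a direct pigeonhole: under any $(n-1)$-coloring the $n$ numbers $1,2,4,\dots,2^{n-1}$ cannot all receive distinct colors, so some pair $(\beta,2^{j}\beta)$ with $1\le j\le n-1$ is monochromatic. That single observation shows the homogeneous family $S=\{(2^{j}\beta,\beta):1\le j\le n-1\}$ is itself $(n-1)$-regular, which is precisely the hypothesis Lemma~2.1 requires. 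The paper then applies Lemma~2.1 to $S$ with $q=2^{n-1}$ and $M=2^{n}$, obtaining $2^{j}b+\lambda_1 d$, $b+\lambda_2 d$, and $2^{n-1}d$ all of one color, and finishes with an explicit parametrization of the $x_i$ (case-splitting on the parities of $n$ and $j$) in which $\lambda_1,\lambda_2$ can be solved as integers of absolute value at most $M$. Your ``scale-matching'' intuition is right, but the bridge between scales is the pigeonhole pair $(b,2^{j}b)$, not an induction; and Lemma~2.2 alone---a progression together with the single extra value $q+d$---does not provide enough monochromatic structure for this equation (indeed the divisibility hypothesis of Theorem~3.4 fails here when $n$ is odd).
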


\begin{proof}
First we will show that the equation is always $(n-1)$-regular. If we take any $(n-1)$-coloring $C : \mathbb{N} \rightarrow [1,n-1] $, then there must be an $i,j$ in the set of $n$ integers $\{2^0, 2^1, ..., 2^{n-1}\}$ such that $C(2^i) = C(2^j)$ by the pigeonhole principle. Thus, there must be positive integers $x \in [1, 2^{n-1}]$ and $y \in [1, n-1]$, such that $C(x) = C(2^yx)$. So we define a set $S$ as follows:
\[S = \{(a,b) \in \mathbb{N}^2 \ \ : \ \ a = 2^jb \textnormal{ where } j \in [1, n-1] \}.\]
Clearly, $S$ is homogeneous and, as we have just shown, $(n-1)$-regular. Let $q = 2^{n-1}$ and $M = 2^n$.

By Lemma 2.1, there is a  positive integer $d \in \mathbb{N}$ and set $\{2^jb,b\} \in S$, where $b \in [1, n-1]$, such that
\[2^jb + \lambda_1d \ \ \kern 0.08em : \ |\lambda_1| \leq 2^n\]
\[b + \lambda_2d \ \ \ \ \ : \ |\lambda_2| \leq 2^n\]
\[2^{n-1}d\]
are the same color.  \\ \\
We first consider the case that $n$ is even. So we are dealing with the equation
\[x_1 - 2x_2 + ... - 2^{n-1}x_n = 0 \tag{6} \label{eq6}\]
Now, if $j$ is odd, then $a_{n-j}$ is positive, so we parametrize $x_1, ..., x_n$ as follows:
\[   x_i = \left\{ \begin{array}{ll} 2^{n-1}d \ \ \ \kern 0.21em  \ \ \ : \ 1 \leq i < n, i \neq n-j \\ 2^jb + \lambda_1d \ \ : \ i = n-j \\ b + \lambda_2d \ \ \ \kern 0.25em \ : \ i = n \end{array} \right.\]
where $\lambda_1$ and $\lambda_2$ are integers with $|\lambda_1|, |\lambda_2| \leq M$.
For these values to satisfy \eqref{eq6}, we must have
\[2^{n-1}d (1 - 2 + ... + 2^{n-2} - 2^{n-1-j}) + 2^{n-1-j}(2^jb + \lambda_1d) - 2^{n-1}(b + \lambda_2d) = 0 \tag{7} \label{eq7}\]
If $j$ is even, then $a_{n-j}$ is negative, the same sign as $a_n$, so we parametrize $x_1, ..., x_n$ as follows:
\[   x_i = \left\{ \begin{array}{ll} 2^{n-1}d \ \ \ \kern 0.2em  \ \ \ : \ 1 \leq i < n, i \neq n-j, i \neq n-j+1 \\ 2^jb + \lambda_1d \ \ : \ i = n-j, i = n-j+1 \\ b + \lambda_2d \ \ \ \ \kern 0.29em : \ i = n \end{array} \right.\]
Since $-2^{n-1-j} + 2^{n-j} = 2^{n-1-j},$ we are left with \eqref{eq7} again.
The expression $1 - 2 + ... + 2^{n-2} - 2^{n-1-j}$ simplifies to $\frac{1-2^{n-2}}{3} + 2^{n-2} - 2^{n-1-j},$ which simplifies to 
\[\frac{2^{n-1} + 1}{3} - 2^{n-1-j}\]
Since $d > 0$, we are left with 
\[2^{n-1}\biggg{\frac{2^{n-1} + 1}{3} - 2^{n-1-j}} + 2^{n-1-j}\lambda_1 - 2^{n-1}\lambda_2 = 0\]
Rearranging and dividing out by $2^{n-1-j},$
\[\lambda_1 - 2^j\lambda_2 = 2^{n-1} - \frac{2^{n-1+j} + 2^j}{3}\]
We may choose $\lambda_1 = 2^{n-1} \leq M$ and $\lambda_2 = \frac{2^{n-1} + 1}{3} \leq M.$ Furthermore, since $n-1$ is odd, $2^{n-1} \equiv 2 $ mod 3, so $\lambda_2$ is indeed a positive integer. Therefore, these values of $\lambda_1$ and $\lambda_2$ produce a monochromatic solution $x_1, ..., x_n$ to \eqref{eq5} if $n$ is even. \\ \\
Finally we consider the case that n is odd, giving the equation
\[x_1 - x_2 + ... + 2^{n-1}x_n = 0 \tag{8} \label{eq8}\]
If $j$ is odd, we parametrize $x_1, ..., x_n$ as follows:
\[   x_i = \left\{ \begin{array}{ll} 2^{n-1}d \ \ \ \  \ \ \kern 0.21em : \ 1 \leq i < n, i \neq n-j \\ 2^jb + \lambda_1d \ \ : \ i = n-j \\ b + \lambda_2d \ \ \kern 0.25em \ \ : \ i = n \end{array} \right.\]
If $j$ is even, we alter the parametrization identically as before to account for the sign of $a_{n-j}$:
\[   x_i = \left\{ \begin{array}{ll} 2^{n-1}d \ \kern 0.2em \ \  \ \ \ : \ 1 \leq i < n, i \neq n-j, i \neq n-j+1 \\ 2^jb + \lambda_1d \ \ : \ i = n-j, i = n-j+1 \\ b + \lambda_2d \ \ \ \ \kern 0.28em : \ i = n \end{array} \right.\]
If the above values are to satisfy \eqref{eq8}, we must have:
\[2^{n-1}(-1 - 2^2 - 2^4 - ... - 2^{n-3} + 2^{n-1-j}) - 2^{n-1-j}\lambda_1 + 2^{n-1}\lambda_2 = 0\]
Simplifying, we get
\[2^{n-1}\biggg{\frac{1-2^{n-1}}{3} + 2^{n-1-j}} - 2^{n-1-j}\lambda_1 + 2^{n-1}\lambda_2 = 0\]
Rearranging terms and dividing out by $2^{n-1-j}$, we see that
\[\lambda_1 - 2^j\lambda_2 = 2^{n-1} - \frac{2^{n-1+j} - 2^j}{3}\]
Now we can let $\lambda_1 = 2^{n-1} \leq M$ and $\lambda_2 = \frac{2^{n-1}-1}{3} \leq M.$ Furthermore, since $n$ is odd, and even powers of 2 must be 1 mod 3, $\lambda_2$ is indeed a positive integer. \\

Finally, we prove the equations are not $n$-regular. Note that $O_2(a_i) = i-1.$ So the $2$-adic orders of the coefficients in \eqref{eq5} are $0, 1, ..., n-1,$ and are thus all unique mod $n$. By Theorem 3.1, then, the equations \eqref{eq5} are not $n$-regular, regardless of the value of $n$.
\end{proof}

\noindent In 2005, Fox and Radoicic [5] showed that the family of equations 
\[x_1 + 2x_2 + ... + 2^{n-2}x_{n-1} - 2^{n-1}x_n \tag{9} \label{eq9} = 0\]
was not n-regular, and more recently Golowich [4] showed that the equations \eqref{eq9} are indeed $(n-1)$-regular.
Now we will generalize both of their findings, showing that, for any prime positive integer $p \geq 2$, the equation
\[x_1 + px_2 + ... + p^{n-2}x_{n-1} - p^{n-1}x_n = 0 \tag{10} \label{eq10}\]
has degree of regularity $n-1.$
\begin{theorem}
For any prime positive integer $p$, the family of equations
\[x_1 + px_2 + ... + p^{n-2}x_{n-1} - p^{n-1}x_n = 0\]
is $(n-1)$-regular but not $n$-regular.
\end{theorem}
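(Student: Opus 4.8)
The plan is to mirror the two-part structure of Theorem 3.2: establish non-$n$-regularity through Theorem 3.1, and $(n-1)$-regularity through Lemma 2.1. The non-$n$-regular direction is immediate. Taking the prime in Theorem 3.1 to be $p$ itself, the coefficients are $a_i = p^{i-1}$ for $1 \le i \le n-1$ and $a_n = -p^{n-1}$, so $O_p(a_i) = i-1$ for $i \le n-1$ and $O_p(a_n) = n-1$ (the sign is irrelevant to $p$-adic order). The list of orders is thus exactly $0, 1, \ldots, n-1$, which are pairwise distinct and hence distinct mod $n$; Theorem 3.1 then yields that the equation is not $n$-regular.

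For the $(n-1)$-regular direction, I would first record the regularity of the relevant homogeneous family. Under any $(n-1)$-coloring $C$, the $n$ powers $p^0, p^1, \ldots, p^{n-1}$ force a repeated color, and if $C(p^a) = C(p^c)$ with $a < c$ then $\{p^{c-a}\cdot p^a,\ p^a\}$ is a monochromatic pair with exponent $c-a \in [1,n-1]$. Hence the homogeneous family $S = \{(x,y)\in\mathbb{N}^2 : x = p^{j}y,\ j\in[1,n-1]\}$ is $(n-1)$-regular, and I can invoke Lemma 2.1 with $q = p^{n-1}$ and $M = p^n$. This produces a positive integer $d$ and a monochromatic pair $\{p^{j}b,\ b\}\in S$ with $j\in[1,n-1]$, such that $p^{j}b+\lambda_1 d$, $\,b+\lambda_2 d$ (for all $|\lambda_1|,|\lambda_2|\le M$) and $p^{n-1}d$ all receive one common color.

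The heart of the argument is the choice of parametrization. I would set $x_n = b + \lambda_2 d$, place the large value at position $n-j$ by taking $x_{n-j} = p^{j}b + \lambda_1 d$, and set $x_i = p^{n-1}d$ for every other $i < n$; all of these share a color, so the assignment is monochromatic. The reason for position $n-j$ is that its coefficient is $p^{(n-j)-1}$, so its $b$-contribution is $p^{n-j-1}\cdot p^{j}b = p^{n-1}b$, which exactly cancels the $-p^{n-1}b$ coming from $x_n$; the $b$-terms vanish identically. Substituting into the equation, dividing by $d$ and then by $p^{n-j-1}$, the whole identity collapses to the single linear condition
\[\lambda_1 - p^{j}\lambda_2 = p^{n-1} - p^{j}\cdot\frac{p^{n-1}-1}{p-1}.\]
I would then verify that the $j$-independent choice $\lambda_1 = p^{n-1}$ and $\lambda_2 = \frac{p^{n-1}-1}{p-1} = 1 + p + \cdots + p^{n-2}$ satisfies this for every $j$, and that both values lie in $[0,\,p^{n-1}]\subseteq[-M,M]$, so the assignment is a genuine monochromatic solution.

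I expect the main obstacle to be organizational rather than computational: recognizing position $n-j$ as the unique slot whose coefficient annihilates the $p^{n-1}b$ term, and spotting the uniform solution $\lambda_1 = p^{n-1}$, $\lambda_2 = \frac{p^{n-1}-1}{p-1}$ that works for all $j$ simultaneously. After that the verification is routine. This case is in fact cleaner than Theorem 3.2: since every coefficient $p^{i-1}$ with $i \le n-1$ is positive, there is no alternating sign to absorb, so I would need neither the split into odd/even $j$ nor the two-position trick, and $\lambda_2$ is automatically a positive integer as a geometric sum, eliminating the residue-mod-$3$ checks used there. The only points I would double-check are the boundary indices — $j \in [1,n-1]$ guarantees $n-j \in [1,n-1]$, so the special interior position never collides with $x_n$ — and that $M = p^n$ comfortably bounds both choices of $\lambda$.
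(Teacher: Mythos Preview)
Your proposal is correct and follows essentially the same route as the paper: the same homogeneous family $\{(p^{j}b,b)\}$, the same invocation of Lemma~2.1 with $q=p^{n-1}$, the same parametrization $x_{n-j}=p^{j}b+\lambda_1 d$, $x_n=b+\lambda_2 d$, $x_i=p^{n-1}d$ otherwise, leading to the identical reduced equation and the same choice $\lambda_1=p^{n-1}$, $\lambda_2=(p^{n-1}-1)/(p-1)$, with non-$n$-regularity via Theorem~3.1 exactly as you describe. The only cosmetic difference is that the paper treats $(n-1)$-regularity before non-$n$-regularity, whereas you reverse the order.
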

\begin{proof}
Again, we begin by showing that the equation is always $(n-1)$-regular. If we take any $(n-1)$-coloring $C : \mathbb{N} \rightarrow [1,n-1] $, then there must be an $i,j$ in the set of $n$ integers $\{p^0, p^1, ..., p^{n-1}\}$ such that $C(p^i) = C(p^j)$ by the pigeonhole principle. Thus, there must be positive integers $x \in [1, p^{n-1}]$ and $y \in [1, n-1]$, such that $C(x) = C(p^yx)$. So we define a set $P$ as follows:
\[P = \{(\alpha,\beta) \in \mathbb{N}^2 \ \ : \ \ \alpha = p^j\beta \textnormal{ where } j \in [1, n-1] \}.\]
By Lemma 2.1, there is a  positive integer $d \in \mathbb{N}$ and set $\{p^j\beta,\beta\} \in P$, where $\beta \in [1, n-1]$, such that

\[p^j\beta + l_1d \ \ \kern 0.072em : \ |l_1| \leq M\]
\[\beta + l_2d \ \ \ \ \ : \ |l_2| \leq M\]
\[p^{n-1}d\]
are the same color. Thus we parametrize $x_1, ..., x_n$ as follows:
\[x_i = \left\{ \begin{array}{ll} p^{n-1}d \ \ \ \ \kern 0.014em \ \ \ : \ 1 \leq i < n, i \neq n-j \\ p^jb + l_1d \, \kern 0.1em \, \, \, \, \, : \ i = n-j \\ b + l_2d \kern 2.02em : \ i = n \end{array} \right.\]
where $l_1$ and $l_2$ are integers with $|l_1|, |l_2| \leq M$. Since all of $x_1, ..., x_n$ are in one of the 3 forms found through Lemma 2.1, they are all the same color. Since they must also satisfy \eqref{eq10}, we have that
\[p^{n-1}d (1 + p + ... + p^{n-2} - p^{n-1-j}) + p^{n-1-j}(p^j\beta + l_1d) - p^{n-1}(\beta + l_2d)\]
Simplifying the geometric sum in the first term and canceling out the $p^{n-1}$ and $-p^{n-1}$, we are left with
\[p^{n-1}\biggg{\frac{p^{n-1}-1}{p-1} - p^{n-1-j}} + p^{n-1-j}l_1 - p^{n-1}l_2 = 0\]
We can divide out by $p^{n-1-j}$ and rearrange terms to see that
\[l_1 - p^jl_2 = p^{n-1} - p^j\biggg{\frac{p^{n-1}-1}{p-1}}\]
Now we can let $l_1 = p^{n-1} \leq M$ and $l_2 = \frac{p^{n-1}-1}{p-1} = 1 + p + ... + p^{n-2} \leq M$. Therefore, regardless of the coloring, $l_1$ and $l_2$ produce a monochromatic solution $x_1, ..., x_n$ to \eqref{eq10}. \\ 
Again, we now show that the equations \eqref{eq10} are not $n$-regular. We proceed identically as in our previous proof, noting that $O_p(a_i) = i-1$ for all $i \leq n$, and thus all $O_p(a_i)$ are distinct mod $n.$ By Theorem 3.1, the family of equations \eqref{eq10} is not $n$-regular.
\end{proof}
Unfortunately, while the proof of $(n-1)$-regularity extends beyond just primes to all positive integers $a \geq 2$, the notion of $p$-adic order required to show that the equations are not $n$-regular does not extend easily beyond primes. For example, property 1 of $p$-adic order no longer holds. Given 2 numbers $q_1a^{k_1}$ and $q_2a^{k_2}$ with $a$-adic order $k_1$ and $k_2$, their product $q_1q_2a^{k_1 + k_2}$ may have order greater than $k_1 + k_2$ if $a \, | \, q_1q_2$.

Now that we have looked at specific equations in $n$ variables with degree of regularity $n-1$, we will present a condition which is sufficient to guarantee that a linear homogeneous equation in $n$ variables is $(n-1)$-regular.

\begin{theorem}
For any linear homogeneous equation $S$ in $n$ variables, if it can be expressed as 
\[\sum\limits_{1}^{m}a_ix_i - \sum\limits_{1}^{n-m}b_ix_i = 0 \tag{11} \label{eq11}\]
where all of $a_1, ..., a_m$ and $b_1, ..., b_{n-m}$ are positive integers, such that $\sum a_i ~ | \sum b_i$, then $S$ is $(n-1)$-regular.
\end{theorem}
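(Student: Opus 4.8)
The plan is to first record the arithmetic content of the hypothesis. Writing $A=\sum_{i=1}^{m}a_i$ and $B=\sum_{i=1}^{n-m}b_i$, the assumption $A\mid B$ says that $t:=B/A$ is a positive integer. If $t=1$ then $A=B$, and the constant assignment $x_i=y_j=v$ solves the equation for any single monochromatic $v$, so $S$ is even $1$-regular; I would therefore assume $t\ge 2$. The guiding observation is that the ``diagonal'' assignment putting every $a$-variable at a value near $t\beta$ and every $b$-variable at a value near $\beta$ satisfies the equation to leading order, since $A\cdot t\beta=B\beta$ --- it is precisely $A\mid B$ that makes this scale ratio the integer $t$. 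The obstruction is that a monochromatic pair of exact ratio $t$ need not exist, so the jump by the factor $t$ must be realized only approximately, using a long monochromatic progression to absorb the discrepancy.

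Concretely, I would apply Lemma~\ref{lemma1} with a large $M$ and a value $q$ chosen as a suitable multiple of $A$, obtaining $k,d>0$ for which the progression $\{k+\lambda d:\ |\lambda|\le M\}$ and the value $q+d$ all receive one color. I would then assign the $b$-variables to base-scale values taken from this monochromatic set and the $a$-variables to the higher scale, writing each variable as an anchor plus an integer multiple $\lambda_i d$ or $\mu_j d$ with $|\lambda_i|,|\mu_j|\le M$ as free parameters. Substituting into $\sum a_i x_i-\sum b_j y_j=0$ and collecting terms groups the left-hand side by the uncontrolled center $k$, the chosen constant $q$, and the step $d$.

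The next step is to make the coefficient of the uncontrolled center $k$ vanish by balancing the total weight of the variables that sit at the common center; this is where $A\mid B$ does the real work, since the leftover weight is exactly $B-A=A(t-1)$, and this is what the auxiliary block $q+d$ is made to carry. What survives is a single linear Diophantine relation in the parameters $\lambda_i,\mu_j$, and I would finish by exhibiting explicit integer values for them lying in $[-M,M]$ --- in the spirit of the explicit choices $\lambda_1=2^{\,n-1}$ and $\lambda_2=(2^{\,n-1}\pm1)/3$ used in Theorems~3.2 and~3.3 --- so that the corresponding $x_i,y_j$ all lie in the monochromatic set and solve the equation.

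The hard part is the joint control of the two parameters $k$ and $d$ that Lemma~\ref{lemma1} produces but does not pin down. Eliminating $k$ forces a weighted-count identity that closes only because $\sum a_i\mid\sum b_i$; and the residual relation is solvable only when the uncontrolled step $d$ divides the prescribed offset, which means $q$ must be chosen with enough divisibility built in while keeping the corrections inside the window $|\lambda_i|,|\mu_j|\le M$. Arranging these two demands to hold simultaneously for every equation meeting the hypothesis --- equivalently, showing the factor-$t$ scale jump can always be realized monochromatically together with the fine progression adjustments --- is the crux, and is exactly the point at which the divisibility condition must be used in full.
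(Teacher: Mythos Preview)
Your plan follows the same template as the paper --- invoke Lemma~2.2 and build the solution from the monochromatic data it produces --- but the paper's execution is far more direct than what you outline. It introduces no per-variable parameters $\lambda_i,\mu_j$ at all. With $q=\sum a_i$ and $M=\sum a_i+\sum b_i$, the paper assigns \emph{every} $a$-side variable the single common value $\sum b_j+\lambda d$ and \emph{every} $b$-side variable the single common value $\sum a_i+d$. Substituting,
\[
\Bigl(\textstyle\sum a_i\Bigr)\Bigl(\textstyle\sum b_j+\lambda d\Bigr)-\Bigl(\textstyle\sum b_j\Bigr)\Bigl(\textstyle\sum a_i+d\Bigr)=0,
\]
the cross term $(\sum a_i)(\sum b_j)$ cancels outright, and one reads off $\lambda=\sum b_j/\sum a_i=t$, an integer by hypothesis and bounded by $M$. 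There is no residual Diophantine system, no elimination of an uncontrolled center, and no divisibility-by-$d$ condition to check.

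The difficulty you spend most of the proposal on --- killing the coefficient of an uncontrolled $k$ and then forcing $d$ to divide a prescribed offset --- is precisely what the paper sidesteps by taking the progression to be centered at $\sum b_j$ rather than at an unknown $k$. Your concern that Lemma~2.2 hands back $k$ rather than letting you choose it is legitimate, and the paper is silent on that point; but for your write-up what matters is that you never actually resolve the ``crux'' you name. The proposal ends by describing the obstruction rather than removing it, so as a proof it is incomplete. Worse, in the generality you set up the obstruction is genuine: once $d$ is handed to you, no choice of $q$ made in advance can force $d$ to divide a fixed integer built from $A$ and $B$, so the route through ``choose $q$ with enough divisibility built in'' cannot close. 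The remedy is to collapse to the two-block substitution above, where the cancellation of $(\sum a_i)(\sum b_j)$ eliminates the anchor automatically and the hypothesis $A\mid B$ is used exactly once, to make the lone parameter $\lambda$ an integer.
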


\begin{proof}
We consider any $(n-1)$-coloring of the positive integers. Assume that $\sum a_i \ | \sum b_i$, and let $M = \sum a_i + \sum b_i.$ By Lemma 2.2, there are $k,d > 0$ such that
\[\sum b_i + \lambda d \ \ : \ \ |\lambda| \leq M \]
and
\[\sum a_i + d\]
are all the same color.
Now we parametrize $x_1, ..., x_n$ as follows:
\[   x_i = \left\{ \begin{array}{ll} \sum b_i + \lambda d \ \ \ \ : \ 1 \leq i \leq m \\ \sum a_i + d \ \ \ \ \ \kern 0.15em : \ m + 1 \leq i \leq n \end{array} \right.\]
In order for \eqref{eq11} to be satisfied, we must have
\[\sum a_i \biggg{\sum b_i + \lambda d} - \sum b_i\biggg{\sum a_i + d} = 0\]
Rearranging and solving for $\lambda$, we get that
\[\lambda = \frac{\sum b_i}{\sum a_i}\]
which is an integer since $\sum a_i \ | \sum b_i$. Note that, by symmetry, $\sum b_i \ | \sum a_i$ is also sufficient.
\end{proof}

\section{Conclusion}
In addition to having a way of generating equations in $n$ variables which are not $n$-regular through Theorem 3.1, Theorem 3.4 now gives us a way of generating equations which we know to be $(n-1)$-regular. Thus, using both, we can systematically create equations of any degree of regularity by guaranteeing that both conditions are satisfied. For example, consider the following equation
\[q_1x_1 + q_2px_2 + ... + q_{n-1}p^{n-2}x_{n-1} - (q_1 + q_2p + ... + q_{n-1}p^{n-2})p^{n-1}x_n = 0 \tag{12} \label{eq12}\]
where $p \nmid q_i$ for $1 \leq i \leq n$. By property 2 of $p$-adic order, $q_1 + ... + q_{n-1}p^{n-2}$ has $p$-adic order 0, and thus the equation is not $n$-regular by Theorem 3.1. Additionally, the sum of the positive coefficients divide the negative coefficient with quotient $p^{n-1}$, so the equation is $(n-1)$-regular by Theorem 3.4.

Not only does the family of equations \eqref{eq12} provide an alternate proof of Rado's Conjecture, but due to the looseness of restrictions on the values of $p$ and $q_1, ..., q_n$, given any positive integer $r$, it is immediately easy to find several different families of equations with degree of regularity $r$. The results in this paper therefore move us a lot closer to understanding what about an equation causes it to possess a certain degree of regularity; in other words, we may eventually see degree of regularity as a fundamental property of linear homogeneous equations. 
These results also present development and potential results in coding and information theory, which are focused on the secure and rapid transmission of information across information channels. In particular, a better understanding of the degree of regularity of linear equations should help us understand how certain patterns within the input set of an information channel affect the maximum rate at which a channel can communicate messages without error, referred to in information theory as the Shannon capacity [6].

However, current methods of examining the degree of regularity are still relatively weak and unclear. In order to reach a point where we can effectively talk about equations in terms of their degree of regularity, a useful next step would be to extend recent results in order to establish a necessary and sufficient condition for a linear homogeneous equation in $n$ variables to have degree of regularity $n-1.$ At the very least, further research in this area should move us toward classifying equations by their degree of regularity.
 
\end{document}